
\documentclass[12pt, twoside,a4paper, draft]{article}
\usepackage[utf8]{inputenc}
\newcommand{\CopyName}{Yu.\ Volkov$^1$,\ O.\ Volkov$^2$,\ N.\ Voinalovych$^3$} 
\newcommand{\NAME}{Yu.\ Volkov,\ O.\ Volkov,\ N.\ Voinalovych} %
\newcommand{\rightheadtext}{MIXED EXPONENTIAL STATISTICAL STRUCTURES} 
\usepackage[english,russian,ukrainian]{babel}
\usepackage{ms_we4}

\renewcommand{\refname}{\refnam}
\setcounter{lang}{0} 
\vs
\newcommand{\tit}{MIXED EXPONENTIAL STATISTICAL STRUCTURES AND THEIR APPROXIMATION OPERATORS} 
\date{}
\begin{document}
\hbox to \textwidth{\footnotesize\textsc Mатематичні Студії. 
\hfill

Matematychni Studii}
\vspace{0.3in}
\textup{\scriptsize{УДК 519.21}} \vs 
\markboth{{\NAME}}{{\rightheadtext}}
\begin{center} \textsc {\CopyName} \end{center}
\begin{center} \renewcommand{\baselinestretch}{1.3}\bf {\tit} \end{center}

\vspace{20pt plus 0.5pt} {\abstract{ \noindent Yu.\  Volkov,\ O.\  Volkov,\ N.\  Voinalovych.\ 
\textit{Mixed  Exponential  Statistical Structures  and  Their
Approximation  Operators,} \vspace{3pt} \ English  


The paper examines the construction and analysis of a new class of mixed exponential statistical structures that combine the properties of stochastic models and linear positive operators. The relevance of the topic is driven by the growing need to develop a unified theoretical framework capable of describing both continuous and discrete random structures that possess approximation properties.
The aim of the study is to introduce and analyze a generalized family of mixed exponential statistical structures and their corresponding linear positive operators, which include known operators as particular cases. We define auxiliary statistical structures $\mathbf{B}$ and $\mathbf{H}$ through differential relations between their elements, and construct the main Phillips-type structure. Recurrent relations for the central moments are obtained, their properties are established, and the convergence and approximation accuracy of the constructed operators are investigated.
The proposed approach allows mixed exponential structures to be viewed as a generalization of known statistical systems, providing a unified analytical and stochastic description. The results demonstrate that mixed exponential statistical structures can be used to develop new classes of positive operators with controllable preservation and approximation properties. The proposed methodology forms a basis for further research in constructing multidimensional statistical structures, analyzing operators in weighted spaces, and studying their asymptotic characteristics.

}} \vsk

\subjclass{41A36, 41A25, 60E05, 47D03} 

\keywords{mixed exponential statistical structures; linear positive operators; Phillips operators; Durrmeyer operators; central moments; covariance characteristic; stochastic models} 
\renewcommand{\refname}{\refnam}

\vskip10pt


\noindent\textbf{1. Introduction.} 
In modern probability theory and mathematical statistics, studies of linear positive operators associated with various classes of distributions of random variables play a significant role. Such operators have deep theoretical significance and are widely used in problems of function approximation, modeling of random processes, and construction of numerical methods. Of particular interest are operators generated by stochastic structures that generalize classical distributions — Binomial, Poisson, Negative Binomial, Gamma, and Beta distributions.

The problem of constructing and studying the properties of positive operators has a long history in mathematical analysis. This direction was initiated by the work of R.~S.~Phillips, which proposed an inversion formula for the Laplace transform
$
f(x) = \int_{0}^{\infty} e^{-xt} d\mu(t),
$
and showed the connection between semigroups of linear operators
$
T_t = \exp(tA)
$
and statistical structures~\cite{1}.

Further development of these ideas led to the emergence of operators of the J.L.Durrmeyer type~\cite{2}
$$
D_n(f; x) = \sum_{k=0}^{n} f_k \binom{n}{k} x^k (1-x)^{n-k}, \quad f_k = (n+1) \int_{0}^{1} f(t) \binom{n}{k} t^k (1-t)^{n-k} dt.
$$
They combine a discrete and an integral component and provide accurate approximation of continuous functions.

Within the branch of Durrmeyer-type operators and their extensions, modifications, in particular the beta-type operators have attracted considerable attention. Thus, in the work of Deo (2008), direct results were obtained for the beta-operator variant of Durrmeyer's operators~\cite{3}:
$$
B_n^{(\alpha, \beta)}(f; x) = (n + \alpha + \beta + 1) \sum_{k=0}^{n} f_k B(k + \alpha + 1, n - k + \beta + 1),
$$
where $B(\cdot, \cdot)$ is the beta function.

Subsequently, numerous variations of such operators emerged — Bernstein-Durrmeyer, Szász-Durrmeyer, Beta-Durrmeyer, which became basic tools in the study of the properties of positive linear operators~\cite{4,5,6}.
Works devoted to the quantitative estimates of convergence for operators that preserve certain functions play a special role. In particular, in Bigou (2019)~\cite{7}, inequalities of the type
$
\|L_n(f) - f\| \leq C \omega \left(f; \frac{1}{\sqrt{n}}\right),
$
where $\omega(f; \delta)$ is the modulus of continuity, were obtained, which significantly expands the class of known estimates.
These results significantly expand the class of known convergence estimates and create a basis for the construction of new generalized operators, which include the mixed exponential structures considered in this work.

In parallel, another direction is developing — the modeling of stochastic processes using mixed exponential distributions, which combine several exponential components and are used in reliability statistics, risk analysis, and queuing theory~\cite{8}. Such models are distinguished by flexibility and analytical convenience, but the question of their integration with the operator approach remains open.

Thus, despite significant progress in the study of positive operators and mixed exponential distributions, the task of constructing a unified theoretical scheme that combines these directions remains topical. It is precisely such a scheme — in the form of mixed exponential statistical structures — that is proposed in this work.

The research method is the introduction and study of a generalized family of mixed exponential statistical structures and their corresponding linear positive operators, which encompass known operators as special cases.

\vspace{0.5em}
\noindent\textbf{2. Statistical Structures B.}
Let us denote by $b_{n,k} (x)$, $k = 0, 1, 2, \dots$ an integer-valued statistical structure that depends on parameters $x \in X, n \in N$, and is such that
\begin{equation}\label{eq1}
    b(x) \frac{d}{dx} b_{n,k} (x) = (k - nx) b_{n,k} (x),
\end{equation}
where $b(x)$ is a non-negative function, which we shall call the covariance characteristic of the structure. We shall call this structure the structure \textbf{B}.
\vspace{0.5em} 

\begin{remark*}
{\sl Not every non-negative function $b(x)$ can be the covariance characteristic of structure \textbf{B}; for example, the functions $b(x) = 1, ~b(x) = x^2$ cannot be.}
\end{remark*}
The structure \textbf{B} can be constructed as follows. Let us take the power series
$$
    \omega(y) = \sum_{k=0}^{\infty} a_k y^k
$$
with non-negative coefficients, $ a_k \ge 0, \ 0 \le y < R, \ R$~--- the radius
of convergence of the series. Let us consider a random variable $\xi$, which can take
non-negative integer values with probabilities
\begin{equation}\label{eq2}
P\{\xi = k\} := b_k \frac{y^k}{(\omega(y))^n}, \ k=0, 1, 2, \dots,
\end{equation}
where $b_k$ are the coefficients of the power series expansion of the function $(\omega(y))^n, ~n \in N$.

The sequence~\eqref{eq1} defines a statistical structure with parameters $y$ and $n$,
where
$$
    \operatorname{M} \xi = n y \frac{\omega'(y)}{\omega(y)}, \text{~and~} \operatorname{D} \xi = n y \frac{d \operatorname{M} \xi}{dy}.
$$
Let us denote by $y(x)$ the function inverse to the function (it exists because $\operatorname{D} \xi \ge 0$)
\begin{equation}\label{eq3}
x = y \frac{\omega'(y)}{\omega(y)},
\end{equation}
and by $b(x) := \frac{y(x)}{y'(x)}$. Then the mathematical expectation $\operatorname{M} \xi = nx$, and the variance
$\operatorname{D} \xi = nb(x)$. As a result, we obtain a family of distributions that will depend on
the parameter $x$:
$$
    b_{n,k} (x) := b_k \frac{(y(x))^k}{(\omega(y(x)))^n} \cdot
$$
Let us prove that the functions $b_{n,k}(x)$ satisfy the relation~\eqref{eq1}.
We have:
$$
    \log b_{n,k} (x) = \log b_k + k \log y(x) - n \log \omega(y(x)).
$$
Therefore
$$
    \frac{b'_{n,k} (x)}{b_{n,k} (x)} = k \frac{y'(x)}{y(x)} - n \frac{y'(x) \omega'(y(x))}{\omega(y(x))} = \frac{k}{b(x)} - n \frac{y'(x)}{y(x)} \frac{y(x) \omega'(y(x))}{\omega(y(x))},
$$
and taking into account~\eqref{eq3} we obtain~\eqref{eq1} from here.

\begin{lemma}
{\sl Let $I(x)$ be the Fisher information of structure \textbf{B}. Then
$$
I(x) = \frac{n}{b(x)}.
$$}
\end{lemma}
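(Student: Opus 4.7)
The plan is to compute the Fisher information directly from its definition
$$
I(x) = \operatorname{E}\!\left[\left(\frac{\partial}{\partial x}\log b_{n,k}(x)\right)^{2}\right],
$$
exploiting the fact that relation~\eqref{eq1} gives an explicit and very clean formula for the score function of the family $\{b_{n,k}(x)\}_{k\ge 0}$.

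First I would rewrite~\eqref{eq1} in logarithmic form. Dividing both sides of~\eqref{eq1} by $b(x)\,b_{n,k}(x)$ (which is legitimate at points where $b(x)>0$ and $b_{n,k}(x)>0$) yields
$$
\frac{\partial}{\partial x}\log b_{n,k}(x)=\frac{k-nx}{b(x)}.
$$
Thus the score function, viewed as a function of the random index $\xi$, is the affine expression $(\xi-nx)/b(x)$. This is the key observation: the differential characterisation~\eqref{eq1} of structure \textbf{B} makes the score linear in $k$, so its second moment collapses to the variance of $\xi$.

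Next I would square the score and take the expectation with respect to the distribution $P\{\xi=k\}=b_{n,k}(x)$:
$$
I(x)=\operatorname{E}\!\left[\frac{(\xi-nx)^{2}}{b(x)^{2}}\right]=\frac{1}{b(x)^{2}}\,\operatorname{E}\!\left[(\xi-nx)^{2}\right].
$$
Since it was established in the construction of \textbf{B} that $\operatorname{M}\xi=nx$ and $\operatorname{D}\xi=nb(x)$, the numerator is precisely $\operatorname{D}\xi=nb(x)$, and one immediately obtains $I(x)=n/b(x)$, as claimed.

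The only delicate point — hardly an obstacle, but worth recording — is the justification of the moment identities $\operatorname{M}\xi=nx$ and $\operatorname{D}\xi=nb(x)$ in the generality of the axiomatic definition~\eqref{eq1} rather than only for the concrete realisation~\eqref{eq2}–\eqref{eq3}. In fact, both follow from~\eqref{eq1} itself: summing~\eqref{eq1} over $k$ (and using $\sum_k b_{n,k}(x)=1$) gives $\operatorname{M}\xi=nx$, and multiplying~\eqref{eq1} by $k$ before summing gives $\operatorname{D}\xi=nb(x)$, so the proof remains self-contained and does not rely on the specific power-series construction.
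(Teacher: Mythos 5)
Your proposal is correct and follows essentially the same route as the paper: compute $I(x)$ from the definition, use relation~\eqref{eq1} to write the score as $(k-nx)/b(x)$, and reduce the resulting sum to $\operatorname{D}\xi = nb(x)$. Your closing observation that the moment identities $\operatorname{M}\xi = nx$ and $\operatorname{D}\xi = nb(x)$ can themselves be derived by summing~\eqref{eq1} (the paper instead takes them from the power-series construction) is a valid and slightly more self-contained touch, but it does not change the argument.
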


\begin{proof}[Proof]
Indeed,
$$
I(x) = \sum_{k=0}^{\infty} \left( \frac{d \log b_{n,k} (x)}{dx} \right)^2 b_{n,k} (x) = (b(x))^{-2} \sum_{k=0}^{\infty} (k - nx)^2 b_{n,k} (x) = \frac{n}{b(x)}.
$$
\end{proof}

\begin{lemma} {\sl Let $\beta_m (x)$ be the central moments of the distribution. Then the following recurrence relation holds:
\begin{equation}\label{eq4}
\beta_{m+1} (x) = b(x) \left( \frac{d \beta_m (x)}{dx} + n m \beta_{m-1} (x) \right), \ \beta_0 (x) = 1, \ \beta_1 (x) = 0.
\end{equation}}
\end{lemma}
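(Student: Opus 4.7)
The plan is to differentiate the definition of the central moment term-by-term and then use the defining differential relation \eqref{eq1} to convert the derivative of $b_{n,k}(x)$ back into a moment. By definition
$$
\beta_m(x) = \sum_{k=0}^{\infty}(k-nx)^m\, b_{n,k}(x).
$$
The base cases $\beta_0(x)=1$ and $\beta_1(x)=0$ follow immediately from the normalization of the distribution and from the fact (noted before the lemma) that $\mathrm M\,\xi = nx$, so I would only need to state them and move on.

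For the recursive step, I would differentiate $\beta_m(x)$ with respect to $x$, splitting the result into two pieces: one coming from the $x$-dependence of the factor $(k-nx)^m$ and one from the $x$-dependence of $b_{n,k}(x)$. The first piece contributes
$$
-nm\sum_{k=0}^{\infty}(k-nx)^{m-1}b_{n,k}(x) = -nm\,\beta_{m-1}(x),
$$
while for the second piece I would invoke \eqref{eq1}, which gives $b'_{n,k}(x) = \frac{k-nx}{b(x)}\,b_{n,k}(x)$, so that
$$
\sum_{k=0}^{\infty}(k-nx)^m\, b'_{n,k}(x) = \frac{1}{b(x)}\sum_{k=0}^{\infty}(k-nx)^{m+1}b_{n,k}(x) = \frac{\beta_{m+1}(x)}{b(x)}.
$$
Combining the two parts yields $\beta_m'(x) = \beta_{m+1}(x)/b(x) - nm\,\beta_{m-1}(x)$, and multiplying by $b(x)$ and rearranging gives exactly \eqref{eq4}.

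The argument is essentially a direct computation, so there is no genuine conceptual obstacle. The only point requiring some care is the interchange of differentiation and summation in $\frac{d}{dx}\beta_m(x)$; this is standard once one notes that the series \eqref{eq2} converges in the interior of its range with the usual uniform-on-compacta bounds inherited from the power series $\omega(y)$, so I would mention this briefly (or appeal to the implicit smoothness assumption on the structure $\mathbf B$) rather than grinding through the estimate.
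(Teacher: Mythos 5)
Your proof is correct and takes essentially the same route as the paper's: differentiate $\beta_m(x)=\sum_{k}(k-nx)^m b_{n,k}(x)$ term by term, use \eqref{eq1} to replace $b'_{n,k}(x)$ by $(k-nx)b_{n,k}(x)/b(x)$ so that this piece becomes $\beta_{m+1}(x)/b(x)$, and rearrange. Your brief remark justifying the interchange of differentiation and summation is a point the paper passes over silently, but the core computation is identical.
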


\begin{proof}[Proof]
Indeed, $\beta_m (x) = \sum_{k=0}^{\infty} b_{n,k} (x) (k - nx)^m$. Then

$$
    \frac{d \beta_m (x)}{dx} = \sum_{k=0}^{\infty} \frac{d b_{n,k} (x)}{dx} (k - nx)^m - \sum_{k=0}^{\infty} m n b_{n,k} (x) (k - nx)^{m-1} =
$$
$$
    = \sum_{k=0}^{\infty} (b(x))^{-1} b_{n,k} (x) (k - nx)^{m+1} - \sum_{k=0}^{\infty} m n b_{n,k} (x) (k - nx)^{m-1},
$$
and \eqref{eq4} follows from here.
\end{proof}

\begin{corollary}
{\sl
$$
\beta_m (x) =
\begin{cases}
  c_m n^r (b(x))^r + O(n^{r-1}), & \text{if } m = 2r \\
  c_m n^r (b(x))^r b'(x) + O(n^{r-1}), & \text{if } m = 2r + 1
\end{cases},
$$
where $c_m =
\begin{cases}
  (2r - 1)!!, & \text{if } m = 2r \\
  \frac{1}{2} (2r)!! \sum\limits_{i=0}^{r-1} \frac{(2i+1)!!}{(2i)!!}, & \text{if } m = 2r + 1
\end{cases}.
$}
\end{corollary}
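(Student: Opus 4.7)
The plan is to proceed by induction on $m$, applying the recurrence \eqref{eq4} and treating the even and odd cases separately. For the induction to close cleanly, I would strengthen the hypothesis slightly: not only that $\beta_m(x)$ has the stated leading asymptotic form, but also that $\beta_m(x)$ is a polynomial in $n$ of degree $\lfloor m/2 \rfloor$ whose coefficients are smooth functions of $x$ built from $b(x)$ and its derivatives. This stronger form is what lets me control the remainder after the $x$-derivative in \eqref{eq4}, because differentiation in $x$ does not change the degree in $n$.

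The base cases are immediate from $\beta_0 = 1$ and $\beta_1 = 0$, which agree with $c_0 = 1$ (under the convention $(-1)!! = 1$) and with $c_1 = 0$ (the empty sum in the definition of $c_{2r+1}$ at $r = 0$). For the step from $m = 2r$ to $m+1 = 2r+1$, I substitute $\beta_{2r} = c_{2r} n^r b^r + O(n^{r-1})$ and $\beta_{2r-1} = c_{2r-1} n^{r-1} b^{r-1} b' + O(n^{r-2})$ into \eqref{eq4}. Differentiating the leading term of $\beta_{2r}$ gives $c_{2r} r n^r b^{r-1} b'$, while the summand $2rn\beta_{2r-1}$ contributes $2r c_{2r-1} n^r b^{r-1} b'$; multiplying through by $b(x)$ yields $\beta_{2r+1} = (r c_{2r} + 2r c_{2r-1}) n^r b^r b' + O(n^{r-1})$. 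For the step from $m = 2r+1$ to $m+1 = 2r+2$, the derivative $\beta_{2r+1}'$ is only of order $n^r$, whereas $(2r+1) n \beta_{2r}$ is of order $n^{r+1}$; hence only the second summand of \eqref{eq4} contributes at leading order, producing $\beta_{2r+2} = (2r+1) c_{2r} n^{r+1} b^{r+1} + O(n^r)$.

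What remains is to verify the two recursive identities for the constants, namely $c_{2r+2} = (2r+1) c_{2r}$ and $c_{2r+1} = r c_{2r} + 2r c_{2r-1}$. The first is the immediate telescoping $(2r+1)(2r-1)!! = (2r+1)!!$, which matches the stated value of $c_{2(r+1)}$. For the second, unfolding the definitions gives $r c_{2r} + 2r c_{2r-1} = r(2r-1)!! + r(2r-2)!! \sum_{i=0}^{r-2} (2i+1)!!/(2i)!!$, which differs from the target $c_{2r+1} = r(2r-2)!! \sum_{i=0}^{r-1} (2i+1)!!/(2i)!!$ by precisely the $i = r - 1$ term, equal to $r(2r-2)!! \cdot (2r-1)!!/(2r-2)!! = r(2r-1)!!$. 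The main obstacle is this combinatorial bookkeeping of double factorials; the analytic part of tracking orders in $n$ through the recurrence is routine once the polynomial-in-$n$ hypothesis is in place.
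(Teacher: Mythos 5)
Your proof is correct and takes exactly the route the paper intends: the corollary is stated without proof as an immediate consequence of the recurrence \eqref{eq4}, and your induction on $m$ — including the necessary strengthening that $\beta_m(x)$ is a polynomial in $n$ of degree $\lfloor m/2\rfloor$ (so the remainder survives differentiation in $x$) and the verification of the double-factorial identities $c_{2r+2}=(2r+1)c_{2r}$ and $c_{2r+1}=rc_{2r}+2rc_{2r-1}$ — supplies precisely the details the paper omits. Nothing further is needed.
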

\vspace{0.5em}

\noindent
\textbf{Example 1.} The function $\omega(y) = 1 + y$ generates the structure
$$
  b_{n,k} (x) = C_n^k x^k (1-x)^{n-k}, \ 0 \le x \le 1, \ b(x) = x(1-x),
$$
i.e., the Binomial distribution with parameters $n$ and $x$.

\vspace{0.5em}
\noindent\textbf{Example 2.} The function $\omega(y) = e^y$ generates the structure
$$
  b_{n,k} (x) = e^{-nx} \frac{(nx)^k}{k!}, \ 0 \le x < \infty, \ b(x) = x,
$$
i.e., the Poisson distribution with parameter $nx$.

\vspace{0.5em}
\noindent\textbf{Example 3.} The function $\omega(y) = \frac{1}{1-y}$ generates the structure
$$
  b_{n,k} (x) = C_{n+k-1}^k x^k (1+x)^{-n-k}, \ 0 \le x < \infty, \ b(x) = x(1+x),
$$
i.e., the Negative Binomial distribution with parameters $n$ and $x$.

\vspace{0.5em}
\noindent\textbf{Example 4.} The function
$$
  \omega(y) = \frac{1 - \sqrt{1-4y}}{2y}
$$
generates the structure
$$
  b_{n,k} (x) = \frac{n}{2k+n} C_{2k+n}^k x^k (1+x)^{n+k} (1+2x)^{-n-2k}, ~\ 0 \le x < \infty,
$$
$$
  \ b(x) = x(1+x)(1+2x),
$$
i.e., the Catalap distribution with parameters $n$ and $x$ (see ~\cite[ p.~31]{9}).


\vspace{0.51em}
\noindent\textbf{2. Statistical Structures H.}
Let us consider another auxiliary statistical structure, which is defined
by the family of densities $h_{n,k} (t), k = 0, 1, 2, \dots$, with parameters $k \in N, n \in N$, and such that
\begin{equation}\label{eq5}
    h(t) \frac{d}{dt} h_{n,k} (t) = (k - nt) h_{n,k} (t),
\end{equation}

\noindent
where $h(t)$ is a non-negative function, which we shall call the covariance
characteristic of the structure. We shall call this structure the structure \textbf{H}.

The structure \textbf{H} can be constructed as follows. Let us take a measure
$\mu(t)dt$ (absolutely continuous with respect to the Lebesgue measure) for which the
Laplace transform $u(s) = \int_{R} e^{-s\tau} \mu(\tau) d\tau$ exists. Let $t = - \frac{u'(s)}{u(s)}$ and let us denote by
$s(t)$ the inverse function to $t = t(s)$, and by $h(t) := - \frac{1}{s'(t)}$. Then
$$
    h_{n,k} (t) = c_{n,k} e^{-s(t)k} (u(s(t)))^{-n} \text{, ~where } c_{n,k} = \left( \int_{R} e^{-s(t)k} (u(s(t)))^{-n} dt \right)^{-1}
$$
(if the integral converges).

Let us prove that the functions $h_{n,k} (x)$ satisfy the relation~\eqref{eq1}.
We have:
$$
    \log h_{n,k} (t) = \log c_{n,k} - k s(t) - n \log u(s(t)). \text{ Therefore } \frac{h'_{n,k} (t)}{h_{n,k} (t)} = -k s'(t) - n s'(t) \frac{u'(s(t))}{u(s(t))},
$$
and \eqref{eq5} follows from here.

\begin{lemma}
{\sl The structure \textbf{H} exists if $h(t) = a t^2 + b t + c$, and the vector $(a, b, c)$ can take the following values: $(-1, 1, 0)$, $~(1, 1, 0)$, $~(0, 1, 0)$, $~(1, 0, 0)$, $~(0, 0, 1)$, $~(1, 0, 1)$.}
\end{lemma}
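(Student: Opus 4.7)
The plan is to treat the lemma as an existence-by-construction statement: for each of the six listed vectors $(a,b,c)$, exhibit an explicit function $u(s)$ whose induced covariance characteristic $h(t) = -1/s'(t)$ equals $at^2+bt+c$, and verify that the resulting $h_{n,k}(t)$ is a bona fide probability density on the natural interval of $t$ for suitable $n, k$.

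The workable reformulation is that the defining relations $t(s) = -u'(s)/u(s)$ and $h(t) = -1/s'(t)$ are equivalent to the ODE system $dt/ds = -h(t)$, $d\log u/ds = -t$. For any quadratic $h(t) = at^2+bt+c$ the first equation is separable and integrates in closed form -- by partial fractions when $b^2-4ac \neq 0$, by an $\arctan$ substitution when $h$ has no real roots, and by direct integration in the degenerate cases -- producing $t(s)$; a further quadrature yields $u(s)$. Conceptually, $h(t)$ is the variance of the exponentially tilted family $e^{-s\tau}\mu(d\tau)/u(s)$, so the problem is equivalent to Morris's classification of natural exponential families with quadratic variance functions, whose six canonical representatives are exactly those in the lemma.

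Carrying out the integration case by case produces familiar closed forms: $(0,0,1)\mapsto u(s)=e^{s^2/2}$, giving the Gaussian kernel $h_{n,k}(t)\propto e^{kt-nt^2/2}$ on the real line; $(1,0,0)\mapsto u(s)=(1+s)^{-1}$, giving the inverse-Gamma kernel $t^{-n}e^{-k/t}$ on $(0,\infty)$; $(0,1,0)\mapsto u(s)=e^{e^{-s}}$, giving the Gamma kernel $t^k e^{-nt}$ on $(0,\infty)$; $(-1,1,0)\mapsto u(s)=1+e^{-s}$, giving the Beta kernel $t^{n-k}(1-t)^k$ on $(0,1)$; $(1,1,0)\mapsto u(s)=(1-e^{-s})^{-1}$, giving the beta-prime kernel $t^k(1+t)^{-n-k}$ on $(0,\infty)$; and $(1,0,1)\mapsto u(s)=\sec s$, giving the Meixner / hyperbolic-secant kernel $e^{k\arctan t}(1+t^2)^{-n/2}$. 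In each case the defining ODE~\eqref{eq5} is automatic from the construction, which was already verified in general just above the lemma.

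The main technical obstacle will be checking that the normalising constant $c_{n,k}$ is finite in the five unbounded cases -- for instance $\int_0^\infty t^{-n}e^{-k/t}\,dt$ converges only for $n\ge 2$ and $k>0$, and analogous decay restrictions relating $n$ and $k$ arise for the Gamma, beta-prime and Meixner kernels -- so the set of admissible $(n,k)$ must be spelled out. A secondary subtlety is that in the $(0,1,0)$, $(-1,1,0)$ and $(1,1,0)$ entries the formal $u(s)$ is not the Laplace transform of an absolutely continuous measure on the real line, so the preamble hypothesis that $\mu(t)\,dt$ be absolutely continuous must be read loosely: since only the quantities $u'/u$ and $u^{-n}$ enter the final formula, the construction applies to any $u(s)$ for which $t(s)$ is monotone and $h_{n,k}(t)$ is non-negative and integrable.
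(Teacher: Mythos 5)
Your proposal is correct and, at bottom, proves the lemma the same way the paper does: existence is established by exhibiting, for each of the six vectors $(a,b,c)$, an explicit family $h_{n,k}(t)$ satisfying the defining relation \eqref{eq5}, and your six kernels coincide case by case with the densities the paper lists (Beta, beta-prime, Gamma, inverse-Gamma, Gaussian, and the family $e^{k\arctan t}(1+t^2)^{-n/2}$). What you add is the systematic derivation --- integrating the separable equation $dt/ds=-h(t)$ and then $d\log u/ds=-t$, with the Morris classification of quadratic-variance exponential families explaining why exactly these six quadratics occur --- whereas the paper states the densities outright and says relation \eqref{eq5} ``is verified directly''; conversely, the paper supplies what you defer, namely the explicit normalizing constants (including the nontrivial hyperbolic-sine and hyperbolic-cosine constants $c_{n,k}$ in the $(1,0,1)$ case) and the admissibility constraints such as $n>2$. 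Two small points of detail: in the $(-1,1,0)$ case your kernel should read $t^{k}(1-t)^{n-k}$ rather than $t^{n-k}(1-t)^{k}$, since as written it satisfies \eqref{eq5} only after the reindexing $k\mapsto n-k$; and, in your favor, your inverse-Gamma kernel $t^{-n}e^{-k/t}$ corrects an evident typo in the paper, whose $(1,0,0)$ entry prints $e^{-k/n}$, with which \eqref{eq5} fails. Your integrability caveats (e.g.\ $k\ge 1$ in the inverse-Gamma case, and the loose reading of the absolute-continuity hypothesis on $\mu$ for the discrete generating measures arising in the $(0,1,0)$, $(-1,1,0)$ and $(1,1,0)$ cases) are both correct and worth spelling out, since the paper passes over them in silence.
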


\begin{proof}[Proof]
We simply point out these structures, for which the relation
(5) is verified directly.

\noindent
{Structure (-1,1,0):}
$$
    h_{n,k} (t) = (n+1) C_{n}^k t^k (1-t)^{n-k}, \ k = 0, 1, 2, \dots, 0 \le t \le 1, \ h(t) = t(1-t).
$$

\noindent
{Structure (1,1,0):}
$$
    h_{n,k} (t) = (n-1) C_{n+k}^k t^k (1+t)^{-n-k}, \ k = 0, 1, 2, \dots, 0 < t < \infty, \ h(t) = t(1+t), \ n > 2.
$$

\noindent
{Structure (0,1,0):}
$$
    h_{n,k} (t) = \frac{e^{-nt} n^{k+1} t^k}{k!}, \ k = 0, 1, 2, \dots, 0 < t < \infty, h(t) = t.
$$

\noindent
{Structure (1,0,0):}
$$
    h_{n,k} (t) = \frac{k^{n-1} t^{-n}}{\Gamma(n-1)} e^{-k/n}, \ k = 0, 1, 2, \dots, 0 < t < \infty, \ h(t) = t^2, \ n > 2.
$$

\noindent
{Structure (0,0,1):}
$$
    h_{n,k} (t) = \sqrt{\frac{n}{2\pi}} \exp \left(- \frac{(k - n t)^2}{2n} \right), \ k = 0, 1, 2, \dots, - \infty < t < \infty, \ h(t) = 1.
$$

\noindent
{Structure (1,0,1):}
$$
    h_{n,k} (t) = c_{n,k} \exp(k \arctg(t)) (1+t^2)^{-n/2}, \ k = 0, 1, 2, \dots, n > 2,
$$
$$
    - \infty < t < \infty, \ h(t) = 1+t^2,
$$
where
$$
    (c_{n,k})^{-1} = \frac{2\operatorname{sh}(k\pi / 2)(2m - 2)!}{(k^2 + 4)(k^2 + 16)\cdots(k^2 + (2m - 2)^2)}, \quad \text{if } n = 2m
$$
$$
    (c_{n,k})^{-1} = \frac{2\operatorname{ch}(k\pi / 2)(2m)!}{(k^2 + 1)(k^2 + 9)\cdots(k^2 + (2m - 1)^2)}, \quad \text{if } n = 2m+1
$$
\end{proof}

\begin{remark*} {\sl If the argument $t$ of the density $h_{n,k}(t), k=0,1,2,\dots$ is specified
on an interval different from the entire number line, it is assumed that outside this
interval the density is zero.}
\end{remark*}
Further we will use structures with a quadratic covariance characteristic.

\begin{lemma} {\sl If $\alpha_1$ denotes the initial moment of the distribution \textbf{\textit{H}},
then
\begin{equation*}
\alpha_1 = \frac{k+b}{n-2a}.
\end{equation*}}
\end{lemma}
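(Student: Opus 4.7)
The plan is to derive the formula directly from the defining relation \eqref{eq5} by integrating it against the density-normalization against $dt$ and applying integration by parts. The whole argument is a three-line manipulation; no structure-by-structure analysis is needed provided the boundary terms vanish.

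First I would write $\alpha_1 = \int t \, h_{n,k}(t)\,dt$ and integrate both sides of \eqref{eq5} over the support of $h_{n,k}$. The left-hand side becomes
$$
\int h(t)\,h_{n,k}'(t)\,dt \;=\; \bigl[h(t)\,h_{n,k}(t)\bigr]_{\partial}\;-\;\int h'(t)\,h_{n,k}(t)\,dt,
$$
and using $h(t)=at^2+bt+c$ with $h'(t)=2at+b$, the remaining integral equals $2a\alpha_1+b$, assuming the boundary term vanishes. The right-hand side becomes
$$
\int (k-nt)\,h_{n,k}(t)\,dt \;=\; k\;-\;n\alpha_1,
$$
since $h_{n,k}$ is a probability density. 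Equating the two sides gives $k-n\alpha_1=-2a\alpha_1-b$, which rearranges immediately to $\alpha_1=(k+b)/(n-2a)$.

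The main (and only) obstacle is the vanishing of the boundary term $[h(t)h_{n,k}(t)]_{\partial}$. For the six admissible quadratic characteristics listed in the proof of the previous lemma this is routine: on a bounded interval $[0,1]$ we have $h(t)=t(1-t)$, which vanishes at the endpoints; on $(0,\infty)$ the densities for $h(t)=t(1+t)$, $h(t)=t$, $h(t)=t^2$ all vanish at $0$ and decay faster than $h(t)$ grows at $\infty$ (this is where the conditions $n>2$ in the relevant cases come in, ensuring integrability of $t^2 h_{n,k}(t)$); on the full line the Gaussian and the case $h(t)=1+t^2$ (for $n>2$) decay quickly enough for $h(t)h_{n,k}(t)\to 0$ at $\pm\infty$. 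Since the lemma is stated under the standing assumption that the structure $\mathbf{H}$ with the given $(a,b,c)$ exists, these decay/boundary conditions are already built in, and the formula follows.
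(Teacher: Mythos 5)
Your proof is correct and follows essentially the same route as the paper: the paper also integrates the relation \eqref{eq5} (rewritten as $nt\,h_{n,k}(t) = k\,h_{n,k}(t) - (at^2+bt+c)\,h'_{n,k}(t)$) over the support, integrates the $h\,h'_{n,k}$ term by parts to get $n\alpha_1 = k + 2a\alpha_1 + b$, and solves for $\alpha_1$. If anything, your treatment is more careful than the paper's, since you verify the vanishing of the boundary term $\bigl[(at^2+bt+c)\,h_{n,k}(t)\bigr]_{-\infty}^{\infty}$ case by case, whereas the paper drops it without comment.
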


\begin{proof}[Proof] Let us rewrite the relation~\eqref{eq5} as:
\begin{equation*}
n t h_{n,k}(t) = k h_{n,k}(t) - (a t^2 + b t + c) h'_{k,t}(t)
\end{equation*}
and integrate. We get
$$
    n \alpha_1 = k - \int_{-\infty}^{\infty} (a t^2 + b t + c) h'_{k,t}(t) dt =
$$
$$
    = k - \left( \left[ (a t^2 + b t + c) h_{k,t}(t) \right]_{-\infty}^{\infty} - \int_{-\infty}^{\infty} (2 a t + b) h_{n,k}(t) dt \right) = k + 2 a \alpha_1 + b.
$$
\end{proof}

\noindent
The statement of the lemma follows from here.

\begin{lemma}{\sl If $\nu_m(x)$ denotes the central moments of the distribution \textbf{\textit{H}},
then the following recurrence relation holds:
\begin{equation}\label{eq6}
\nu_m(k) = \frac{1}{n-a(m+1)}\left(((m(2a\alpha_1+b)-(n\alpha_1-k))\nu_{m-1} + (m-1)(a\alpha_1^2 + b\alpha_1 + c)\nu_{m-2}\right),
\end{equation}
$$
    \nu_0 = 1, \nu_1 = 0.
$$}
\end{lemma}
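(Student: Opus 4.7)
The plan is to derive the recurrence by the same integration-by-parts technique used in the preceding lemma, but applied against $(t-\alpha_1)^{m-1}$ rather than $1$. Starting from the defining relation
\[
(at^2+bt+c)\,h'_{n,k}(t) = (k-nt)\,h_{n,k}(t),
\]
I would multiply both sides by $(t-\alpha_1)^{m-1}$ and integrate over the support. On the right-hand side, the decomposition $k-nt = (k-n\alpha_1) - n(t-\alpha_1)$ gives
\[
\int (k-nt)(t-\alpha_1)^{m-1} h_{n,k}(t)\,dt = (k-n\alpha_1)\,\nu_{m-1} - n\,\nu_m.
\]

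Next I would integrate the left-hand side by parts, dropping the boundary terms (these vanish on every support in Lemma~3, because $h(t)h_{n,k}(t)$ decays at the endpoints; this should be verified case by case, or justified once through the factor $h(t)$, which is zero at finite endpoints and dominated by the exponential or algebraic decay of $h_{n,k}$ elsewhere). This produces
\[
-\int \frac{d}{dt}\Bigl[(at^2+bt+c)(t-\alpha_1)^{m-1}\Bigr] h_{n,k}(t)\,dt.
\]
The derivative is $(2at+b)(t-\alpha_1)^{m-1} + (m-1)(at^2+bt+c)(t-\alpha_1)^{m-2}$, and the key algebraic step is to expand the coefficients around $\alpha_1$:
\[
2at+b = (2a\alpha_1+b) + 2a(t-\alpha_1),
\]
\[
at^2+bt+c = (a\alpha_1^2+b\alpha_1+c) + (2a\alpha_1+b)(t-\alpha_1) + a(t-\alpha_1)^2,
\]
so that every integrand becomes a pure power of $(t-\alpha_1)$. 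Summing and integrating term by term converts the left-hand side into
\[
-\bigl[(m+1)a\,\nu_m + m(2a\alpha_1+b)\,\nu_{m-1} + (m-1)(a\alpha_1^2+b\alpha_1+c)\,\nu_{m-2}\bigr].
\]

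Equating the two sides and isolating $\nu_m$ gives
\[
(n-a(m+1))\,\nu_m = \bigl[m(2a\alpha_1+b)-(n\alpha_1-k)\bigr]\nu_{m-1} + (m-1)(a\alpha_1^2+b\alpha_1+c)\,\nu_{m-2},
\]
which is exactly \eqref{eq6}. The initial conditions $\nu_0=1$, $\nu_1=0$ are immediate from the definitions. The main obstacle, as anticipated, is not the algebra (which is routine once the expansions around $\alpha_1$ are in place) but the justification of the vanishing boundary terms in the integration by parts; I would handle this by invoking the explicit forms of the six densities from Lemma~3 (at finite endpoints $h(t)=0$ kills the term, while at infinite endpoints the exponential/algebraic decay of $h_{n,k}$ dominates the polynomial factor $(at^2+bt+c)(t-\alpha_1)^{m-1}$ for all $m$).
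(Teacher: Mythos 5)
Your proof is correct and follows essentially the same route as the paper: the paper likewise decomposes $k-nt$ as $-(n\alpha_1-k)-n(t-\alpha_1)$, expands $at^2+bt+c$ around $\alpha_1$, multiplies by $(t-\alpha_1)^{m-1}$, and integrates by parts to reach $(n-a(m+1))\nu_m=\bigl(m(2a\alpha_1+b)-(n\alpha_1-k)\bigr)\nu_{m-1}+(m-1)(a\alpha_1^2+b\alpha_1+c)\nu_{m-2}$, the only cosmetic difference being that you expand after rather than before the integration by parts. You are in fact slightly more careful than the paper about the vanishing boundary terms (the paper carries $(at^2+bt+c)h_{n,k}(t)\big|_{-\infty}^{\infty}$ in its display and silently drops it), though your claim that the decay dominates ``for all $m$'' at infinite endpoints is too strong for the algebraically decaying structures $(1,1,0)$, $(1,0,0)$, $(1,0,1)$, where it tacitly requires $n$ large relative to $m$ --- the same restriction already implicit in the denominator $n-a(m+1)$ and in the existence of the moments themselves.
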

\begin{proof}[Proof] Let us rewrite the relation~\eqref{eq5} as:
$$
    (n\alpha_1 - k)h_{n,k}(t) + n(t-\alpha_1)h_{n,k}(t) = -(at^2+bt+c)h'_{k,t}(t) =
$$
$$
    = -(a\alpha_1^2+b\alpha_1+c + (2a\alpha_1+b)(t-\alpha_1)+a(t-\alpha_1)^2)h'_{k,t}(t).
$$
Multiplying both sides of this equality by $(t-\alpha_1)^{m-1}$ and integrating, we obtain
$$
    (n\alpha_1 - k)\nu_{m-1} + n\nu_m = (at^2+bt+c)h_{k,t}(t)\big|_{-\infty}^{\infty} + (m-1)(a\alpha_1^2+b\alpha_1+c)\nu_{m-2} + a(m+1)\nu_m.
$$
\noindent
\eqref{eq6} follows from here.
In particular, if $m=2$, then
$$
    \nu_2 = \frac{(k+b)(ak+bn-ab)}{(n-2a)^2(n-3a)} + \frac{c}{n-3a}.
$$
\end{proof}

\noindent
\textbf{3. Structures of Phillips Type.}
\begin{definition} {\it A statistical structure of Phillips type} will be called the family of densities $p_n(x,t) = \sum_{k=0}^{\infty} b_{n,k}(x)h_{n,k}(t)$, which depends on the parameter $x, x \in X$, where $X$ is the image of the interval $[0,R)$ under the mapping $x=x(y)$ (see ~\cite{4}).
\end{definition}

Let the function $f$ be defined and integrable on the entire number line. Let us denote by $P_n(f,x)$ the mathematical expectation of the random variable $f(\eta)$, where the distribution of the random variable $\eta$ is given by the density $p_n(x,t)$, i.e.,
\begin{equation}\label{eq7}
    P_n(f,x)=\sum_{k=0}^\infty b_{n,k}(x)\int_{-\infty}^\infty f(t)h_{n,k}(t)dt .
\end{equation}
It is useful to view $P_n(f,x)$ as an operator; it is linear and positive. We shall call the function $b_{n,k}(x)$ the discrete kernel of the operator, and the function $h_{n,k}(t)$ the continuous kernel.

Thus, if $f(t)=t$, we obtain the mathematical expectation $\mathbf{M} \eta$, and in the case of the densities given above, $\mathbf{M} \eta = \alpha(x) = (nx+b)/(n-2a), n>2a$, indeed,

$$
    \alpha(x) = \sum_{k=0}^\infty b_{n,k}(x)\int_{-\infty}^\infty t h_{n,k}(t)dt = \sum_{k=0}^\infty b_{n,k}(x)\alpha_1 = \sum_{k=0}^\infty b_{n,k}(x)\frac{k+b}{n-2a} =
$$
$$
    = \sum_{k=0}^\infty b_{n,k}(x)\frac{(k-nx)+(nx+b)}{n-2a} = \sum_{k=0}^\infty b_{n,k}(x)\frac{k-nx}{n-2a} +
$$
$$
    + \sum_{k=0}^\infty b_{n,k}(x)\frac{nx+b}{n-2a} = 0 + \frac{nx+b}{n-2a}\cdot 1 = \frac{nx+b}{n-2a}.
$$
If $f(t)=(t - \mathbf{M} \eta)^m$, then $P_n(f,x) = \mu_m(x)$ is the central moment of the $m$-th order of the random variable $\eta$.

The paper investigates statistical structures of Phillips type and the operator $P_n(f,x)$.

\begin{theorem}\label{theor1} {\sl Let the covariance characteristic of structure $H$ be a polynomial of degree $r$. Then for the central moments of the random variable $\eta$ the following recurrence relation holds:
$$
    b(x)(\mu'_m(x) + m\alpha'(x)\mu_{m-1}(x)) = n\mu_{m+1}(x) + n(\alpha(x) - x)\mu_m(x) - \sum_{i=0}^r \frac{m+i}{i!}h^{(i)}(\alpha(x))\mu_{m+i-1}(x)
$$}
\end{theorem}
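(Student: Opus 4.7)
The plan is to start from the definition
\[
\mu_m(x)=\sum_{k=0}^\infty b_{n,k}(x)\int(t-\alpha(x))^m h_{n,k}(t)\,dt,
\]
differentiate in $x$, and exploit the two defining relations \eqref{eq1} and \eqref{eq5} together with a Taylor expansion of $h(t)$ about $t=\alpha(x)$. Differentiating under the sum/integral (allowed when everything is polynomially dominated) gives
\[
\mu_m'(x)=\sum_k b_{n,k}'(x)\!\int(t-\alpha(x))^m h_{n,k}(t)\,dt-m\alpha'(x)\mu_{m-1}(x),
\]
so the left-hand side is exactly $\sum_k b(x)b_{n,k}'(x)\!\int(t-\alpha(x))^m h_{n,k}(t)\,dt$. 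Applying \eqref{eq1} turns the factor $b(x)b_{n,k}'(x)$ into $(k-nx)b_{n,k}(x)$.

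Next I would write the algebraic identity
\[
k-nx=(k-nt)+n(t-\alpha(x))+n(\alpha(x)-x),
\]
which splits the resulting expression into three pieces. The second and third directly give $n\mu_{m+1}(x)$ and $n(\alpha(x)-x)\mu_m(x)$ respectively. The first piece, in view of \eqref{eq5}, equals $\sum_k b_{n,k}(x)\!\int h(t)h_{n,k}'(t)(t-\alpha(x))^m\,dt$, which I would integrate by parts in $t$. The boundary terms vanish because $h(t)h_{n,k}(t)(t-\alpha(x))^m\to 0$ at the endpoints of the support for each of the explicit $\mathbf{H}$-structures in Lemma~2 (polynomial times a fast-decaying density, or simply zero at $0,1$), and this is the one analytic point that needs to be checked case by case.

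The remaining integrand is $-h_{n,k}(t)\bigl(h'(t)(t-\alpha(x))^m+mh(t)(t-\alpha(x))^{m-1}\bigr)$. Because $h$ is a polynomial of degree~$r$, the finite Taylor expansion $h(t)=\sum_{i=0}^r\frac{h^{(i)}(\alpha(x))}{i!}(t-\alpha(x))^i$ (and the termwise derivative for $h'$) converts this expression into a finite linear combination of powers $(t-\alpha(x))^{m+i-1}$. A short re-indexing step combines the two sums: the coefficient of $(t-\alpha(x))^{m+i-1}$ becomes
\[
\frac{1}{(i-1)!}+\frac{m}{i!}=\frac{m+i}{i!}\qquad(i\ge 1),
\]
and the $i=0$ term is exactly $m h(\alpha(x))=\frac{m+0}{0!}h^{(0)}(\alpha(x))$, so a single formula works for $i=0,1,\dots,r$.

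Reassembling, the first piece contributes $-\sum_{i=0}^r\frac{m+i}{i!}h^{(i)}(\alpha(x))\mu_{m+i-1}(x)$, and adding the other two pieces yields the claimed identity. The main obstacle is purely bookkeeping: keeping track of the two sources of the coefficient $m+i$ (one from integration by parts of $h(t)h_{n,k}'(t)$, the other from the $m\alpha'(x)\mu_{m-1}(x)$ absorbed into the left-hand side) and verifying that the integration-by-parts boundary terms indeed vanish for each admissible quadratic $h$; once the Taylor expansion is handled uniformly, everything else is a routine reshuffle.
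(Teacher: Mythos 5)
Your proposal is correct and takes essentially the same route as the paper: differentiate $\mu_m(x)$ under the sum and integral, use \eqref{eq1} with the split $k-nx=(k-nt)+n(t-\alpha(x))+n(\alpha(x)-x)$, then reduce the remaining piece via \eqref{eq5} to $\sum_k b_{n,k}(x)\int h(t)(t-\alpha(x))^m h'_{n,k}(t)\,dt$ and evaluate it by Taylor expansion of $h$ about $\alpha(x)$ combined with integration by parts with vanishing boundary terms. The paper packages this last step as an auxiliary lemma (expand $h$ first, integrate each term by parts, getting the factor $m+i$ in one stroke), whereas you integrate by parts once and then expand both $h$ and $h'$, recovering $m+i$ as $\tfrac{1}{(i-1)!}+\tfrac{m}{i!}=\tfrac{m+i}{i!}$ --- the same computation performed in the opposite order.
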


\begin{proof}[Proof]
First, let us establish the following auxiliary statement.

Let the function $f(t)$ be a polynomial of degree $r$, and the function $p(t)$ be continuously differentiable on the interval $[a,b]$ and $f(a)p(a) = f(b)p(b) = 0$.
Then
$$
    \int_a^b f(t)p'(t)(t-x)^m dt = -\sum_{i=0}^r (m+i) \frac{f^{(i)}(x)}{i!} \int_a^b p(t)(t-x)^{m+i-1} dt.
$$
Indeed, by Taylor's formula
$$
    f(t) = \sum_{i=0}^{r} \frac{f^{(i)}(x)}{i!}(t-x)^i.
$$
Therefore
$$
    \int_{a}^{b} f(t) p'(t) (t-x)^m dt = \int_{a}^{b} \sum_{i=0}^{r} \frac{f^{(i)}(x)}{i!} p'(t)(t-x)^{m+i} dt = \sum_{i=0}^{r} \frac{f^{(i)}(x)}{i!} \int_{a}^{b} p'(t)(t-x)^{m+i} dt =
$$
$$
    \sum_{i=0}^{r} \frac{f^{(i)}(x)}{i!} \left( p(t)(t-x)^{m+i} \Big|_a^b - (m+i)\int_{a}^{b} p(t)(t-x)^{m+i-1} dt \right) =
$$
$$
    \sum_{i=0}^{r} \frac{f^{(i)}(x)}{i!} \left( p(b)(b-x)^{m+i} - p(a)(a-x)^{m+i} - (m+i)\int_{a}^{b} p(t)(t-x)^{m+i-1} dt \right) =
$$
$$
    (b-x)^m p(b) f(b) - (b-x)^m p(b) f(b) - \sum_{i=0}^{r} \frac{f^{(i)}(x)}{i!} \left( (m+i)\int_{a}^{b} p(t)(t-x)^{m+i-1} dt \right) =
$$
$$
    - \sum_{i=0}^{r} \frac{f^{(i)}(x)}{i!} \left( (m+i)\int_{a}^{b} p(t)(t-x)^{m+i-1} dt \right).
$$
Since the central moments of the $m$-th order are found by the formula
$$
    \mu_m(x) = \sum_{k=0}^{\infty} b_{n,k}(x) \int_{-\infty}^{\infty} (t - \alpha(x))^m h_{n,k}(t) dt, \text{ we obtain from here}
$$
$$
    b(x)\mu'_m(x) = \sum_{k=0}^{\infty} b_{n,k}(x)(k-nx)\int_{-\infty}^{\infty} (t-\alpha(x))^m h_{n,k}(t)dt - mb(x)\alpha'(x)\mu_{m-1}(x),
$$
and from here
$$
    b(x)(\mu'_m(x) + m\alpha'(x)\mu_{m-1}(x)) = \sum_{k=0}^{\infty} b_{n,k}(x)\int_{-\infty}^{\infty} (k-nt)(t-\alpha(x))^m h_{n,k}(t)dt +
$$
$$
    + \sum_{k=0}^{\infty} b_{n,k}(x)\int_{-\infty}^{\infty} (n(t-\alpha(x)) + n(\alpha(x) - x))(t-\alpha(x))^m h_{n,k}(t)dt =
$$
$$
    \sum_{k=0}^{\infty} b_{n,k}(x) \int_{-\infty}^{\infty} h(t)(t-\alpha(x))^m h'_{n,k}(t) dt + n\mu_{m+1}(x) + n(\alpha(x) - x)\mu_m(x).
$$
It remains to use the auxiliary statement. We obtain
$$
    \int_{-\infty}^{\infty} h(t)(t-\alpha(x))^m h'_{n,k}(t) dt = - \sum_{i=0}^{r} \frac{(m+i)!}{i!} h^{(i)}(\alpha(x)) \int_{a}^{b} p_{n,k}(t)(t-\alpha(x))^{m+i-1} dt.
$$
\end{proof}
And the following statement follows from here.

\begin{corollary*}
{\sl If $h(t) = at^2 + bt + c$, then for the central moments of the Phillips structure we will have the following recurrence relation
$$
    (n - a(m+2))\mu_{m+1}(x) = b(x)\mu'_m(x) + \mu_m(x)((m+1)(2a\alpha(x) + b) - n(\alpha(x) - x)) +
$$
$$
    + \mu_{m-1}(x)(m(a(\alpha(x))^2 + b\alpha(x) + c) + mb(x)\alpha'(x)).
$$}
\end{corollary*}
In particular, if $m=1$, we obtain from here
$$
    \mu_2(x) = \frac{1}{n-3a} \left( a(\alpha(x))^2 + b\alpha(x) + c + b(x)\alpha'(x) \right) =
$$
$$
    = \frac{1}{n-3a} \left( a\left(\frac{nx+b}{n-2a}\right)^2 + b\frac{nx+b}{n-2a} + b(x)\frac{n}{n-2a}\right) , \quad n > 3a.
$$
\noindent
\textbf{Example 5.} Let the density be
$$
    h_{n,k}(t) = e^{-nt} \frac{n^{k+1} t^{k+1}}{k!}.
$$
Then the central moments $\mu_m(x)$ of the random variable $\eta$ satisfy the recurrence relation
\begin{gather}\label{eq8}
\mu_{m+1}(x) = \frac{1}{n} \left( b(x)\frac{d\mu_m(x)}{dx} + m\mu_m(x) + m\mu_{m-1}(x) \left( b(x) + x + \frac{1}{n} \right) \right),
~\mu_0 = 1, \mu_1 = 0.
\end{gather}
From here
$$
    \mu_2(x) = \frac{x+b(x)}{n} + \frac{1}{n^2}.
$$

\noindent
\textbf{Example 6.} Let the density be
$$
    h_{n,k}(t) = (n-1) C_{n+k+1}^{k} \frac{t^k}{(1+t)^{n+k}}.
$$
Then the central moments $\mu_m(x)$ of the random variable $\eta$ satisfy the recurrence relation
\begin{gather*}
    \mu_{m+1}(x) = \frac{1}{n-m-2} \biggl( b(x)\frac{d\mu_m(x)}{dx} + \frac{nm(2x+1)}{n-2}\mu_m(x) + \\
    +\left( \left(\frac{nx+1}{n-2}\right)^2 + \frac{nx+1}{n-2} + \frac{n}{n-2}b(x) \right) m\mu_{m-1}(x) \biggr), \\
    ~n > m+2, ~\mu_0 = 1, ~\mu_1 = 0.
\end{gather*}
From here
$$
    \mu_2(x) = \frac{1}{n-3} \left( \left(\frac{nx+1}{n- 2}\right)^2 + \frac{nx+1}{n-2} + \frac{n}{n-2}b(x) \right).
$$

\vspace{0.5em}
\noindent\textbf{4. Approximation Properties.}

\begin{theorem}\label{theor2} {\sl Let $f \in H^\omega$. Then for $n>3a$
$$
    |P_n(f, x) - f(x)| \leq
$$
$$
    \omega(1/\sqrt{n}) \left( 1 + \frac{n}{n-3a} \left( a\left(\frac{nx+b}{n-2a}\right)^2 + b\frac{nx+b}{n-2a} + b(x)\frac{n}{n-2a} \right) \right) + \omega\left( \left|\frac{2ax+b}{n-2a}\right| \right).
$$}
\end{theorem}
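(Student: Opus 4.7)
The plan is to adapt the classical Shisha--Mond modulus-of-continuity technique, but centred at the mean $\alpha(x)=(nx+b)/(n-2a)$ of the density $p_n(x,\cdot)$ rather than at $x$ itself, so that the central moment $\mu_2(x)$ already computed in the Corollary to Theorem~\ref{theor1} appears directly. Concretely, I would begin from the triangle inequality
$$
|f(t)-f(x)| \leq |f(t)-f(\alpha(x))| + |f(\alpha(x))-f(x)| \leq \omega(|t-\alpha(x)|) + \omega(|\alpha(x)-x|),
$$
and apply the positive linear operator $P_n$ term by term. Since $\sum_{k} b_{n,k}(x)=1$ and each $h_{n,k}$ integrates to one, $P_n$ reproduces constants; the one-line computation $\alpha(x)-x=(2ax+b)/(n-2a)$ then immediately contributes the final summand $\omega(|2ax+b|/(n-2a))$ in the stated bound.

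For the remaining term $P_n(\omega(|t-\alpha(x)|),x)$ I would invoke the standard inequality
$$
\omega(|u|) \leq \left(1+\frac{u^{2}}{\delta^{2}}\right)\omega(\delta),
$$
valid for every real $u$ and every $\delta>0$. This follows from the semi-additivity of $\omega$ by a case split: for $|u|\leq\delta$ it is trivial, and for $|u|>\delta$ one combines $\omega(|u|)\leq(1+|u|/\delta)\omega(\delta)$ with the observation that $|u|/\delta < u^{2}/\delta^{2}$. Taking $\delta=1/\sqrt{n}$ and $u=t-\alpha(x)$, then applying $P_n$ and recognising $P_n((t-\alpha(x))^{2},x)=\mu_2(x)$, yields
$$
P_n(\omega(|t-\alpha(x)|),x) \leq \omega(1/\sqrt{n})\bigl(1+n\mu_2(x)\bigr).
$$

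To finish I would substitute the explicit formula for $\mu_2(x)$ supplied by the Corollary to Theorem~\ref{theor1} (valid under the hypothesis $n>3a$), which recovers the bracketed expression in the claimed bound, and then combine with the $\omega(|\alpha(x)-x|)$ contribution to obtain the theorem. I do not expect any serious technical obstacle here; the point deserving real care is the strategic decision to centre the decomposition at $\alpha(x)$ rather than at $x$. Decomposing at $x$ would force the second-order moment term to pick up an additional $(\alpha(x)-x)^2$ contribution which would then have to be re-absorbed into the bound, whereas the present choice cleanly separates the ``bias'' term $\omega(|\alpha(x)-x|)$ from the ``variance'' term $n\mu_2(x)$ and lets the already-established formula for $\mu_2(x)$ drop in verbatim.
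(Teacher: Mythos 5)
Your proposal is correct and follows essentially the same route as the paper: the same decomposition centred at $\alpha(x)=(nx+b)/(n-2a)$, the same modulus-of-continuity estimate $\omega(|u|)\le\bigl(1+u^{2}/\delta^{2}\bigr)\omega(\delta)$ with $\delta=1/\sqrt{n}$ (the paper phrases it via the integer-part factor $\bigl[\delta^{-1}|t-\alpha(x)|\bigr]$ before passing to the square, but it is the same bound), and the same substitution of the explicit $\mu_2(x)$ from the corollary to Theorem~\ref{theor1}. No meaningful difference in method or content.
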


\begin{proof}[Proof]
$$
    |P_n(f, x) - f(x)| \leq |P_n(f, x) - f(\alpha(x))| + |f(\alpha(x)) - f(x)| \leq
$$
$$
    \leq \sum_{k=0}^{\infty} b_{n,k}(x) \int_{-\infty}^{\infty} |f(t) - f(\alpha(x))| h_{n,k}(t) dt + \omega(|\alpha(x) - x|) \leq
$$
$$
    \leq \sum_{k=0}^{\infty} b_{n,k}(x) \int_{-\infty}^{\infty} \omega(|t - \alpha(x)|) h_{n,k}(t) dt + \omega(|\alpha(x) - x|) \leq
$$
$$
    \leq \omega(\delta) \sum_{k=0}^{\infty} b_{n,k}(x) \int_{-\infty}^{\infty} \left( 1 + \left[ \delta^{-1} |t - \alpha(x)| \right] \right) h_{n,k}(t) dt + \omega(|\alpha(x) - x|) \leq
$$
$$
    \leq \omega(\delta) \left( 1 + \int_{t: \delta^{-1}|t-\alpha(x)| \geq 1} \left[ \delta^{-1}|t - \alpha(x)| \right] \left( \sum_{k=0}^{\infty} b_{n,k}(x) h_{n,k}(t) \right) dt \right) + \omega(|\alpha(x) - x|) \leq
$$
$$
    \leq \omega(\delta) \left( 1 + \sum_{k=0}^{\infty} b_{n,k}(x) \delta^{-2} \int_{-\infty}^{\infty} (t-\alpha(x))^2 h_{n,k}(t) dt \right) + \omega(|\alpha(x) - x|) =
$$
$$
    = \omega(\delta) (1 + \delta^{-2} \mu_2(x)) + \omega(|\alpha(x) - x|).
$$
If we now take $\delta = 1/\sqrt{n}$, we obtain the statement of the theorem.
\end{proof}

\begin{remark}{\sl If
$$
    b_{n,k}(x) = e^{-nx} (nx)^k / k!, ~\text {and} ~h_{n,k}(t) = e^{-nt} \frac{n^{k+1} t^{k+1}}{k!},
$$
then the operator $P_n(f, x)$ turns into the slightly modified Phillips operator~\cite{1}, which was used by him to obtain one of the formulas for the inversion of the Laplace transform.}
\end{remark}
This fact explains the name of the statistical structure under study and the corresponding operator.
For such operators, the inequality follows from Theorem 2
$$
    |P_n(f, x) - f(x)| \leq \omega(1/\sqrt{n})(1+x+x^2), ~x \geq 0.
$$

\begin{remark}{\sl There are a number of studies on the approximation properties of operators of type $P_n(f, x)$, where specific functions generated by distributions such as the Binomial, Negative Binomial, Gamma, and Beta distributions are taken as the functions $b_{n,k}(x)$ and $h_{n,k}(t)$. }
\end{remark}
For example, if the discrete kernel is taken as
$$
     b_{n,k}(x) = C_n^k x^k (1-x)^{n-k}, ~0 \leq x \leq 1,
$$
and the continuous kernel is taken as
$$
     h_{n,k}(t) = (n+1) C_n^k t^k (1-t)^k, ~ 0 \leq t \leq 1,
$$
then we obtain the Bernstein-Durrmeyer polynomials (see~\cite{2}).

For these polynomials, the inequality follows from Theorem 2
$$
     |P_n(f, x) - f(x)| \leq \frac{1}{4} \omega(1/\sqrt{n}) + \omega(1/n), ~0 \leq x \leq 1.
$$
If
$$
     b_{n,k}(x) = e^{-nx} \frac{(nx)^k}{k!}, ~ 0 \leq x < \infty, \quad h_{n,k}(t) = (n-1) C_{n+k}^k t^k (1+t)^{-n-k}, ~ 0 < t < \infty,
$$
then we obtain the Szász-Baskakov operators~\cite{4}; for such operators, the inequality follows from Theorem 2
\begin{gather*}
     |P_n(f, x) - f(x)| \leq \omega(1/\sqrt{n}) \left( 1 + \frac{n^2 x^2 + 2n^2 x - 2nx + n - 2}{(n-2)^2 (n-3)} n \right) + \omega\left( \frac{2x+1}{n-2} \right),\\~x \geq 0, ~n > 3.
\end{gather*}
If
\begin{gather*}
     b_{n,k}(x) = C_{n+k}^k x^k (1+x)^{-n-k}, ~ 0 < x < \infty, \\
     h_{n,k}(t) = (n-1) C_{n+k}^k t^k (1+t)^{-n-k}, ~0 < t < \infty,
\end{gather*}
then we obtain the Durrmeyer-Beta operators ~\cite{3}; for such operators, the inequality follows from Theorem 2
\begin{gather*}
     |P_n(f, x) - f(x)| \leq\\
     \leq\omega(1/\sqrt{n}) \left( 1 + \frac{n}{n-3} \left( \left(\frac{nx+1}{n-2}\right)^2 + \frac{nx^2 + 2nx + 1}{n-2} \right) \right) + \omega\left( \left|\frac{2x+1}{n-2}\right| \right), \\
     \quad x > 0, n > 3.
\end{gather*}

\noindent\textbf{5. Сonclusions.} 
The paper constructs a generalized family of mixed exponential statistical structures and their corresponding linear positive operators, which encompass well-known operators of Phillips and Durrmeyer types and their modifications as special cases. The proposed approach combines the statistical properties of mixed exponential distributions with operator methods from approximation theory, which made it possible to obtain new recurrence relations for the central moments and establish estimates of approximation accuracy.

It is shown that the introduced structures generalize classical statistical models (Binomial, Poisson, Beta, and Gamma distributions) and can be used to construct a wide range of operators with specified preservation and convergence properties. The obtained results create a basis for further research in the direction of constructing new operators with controllable statistical characteristics and applications in approximation theory, mathematical statistics, and stochastic process modeling.

Prospects for further research
lie in expanding the proposed approach to cases of multivariate statistical structures, constructing operators with weight functions, and analyzing their properties in weighted spaces. Of particular interest is the application of mixed exponential structures to problems of stochastic modeling, in particular for describing complex systems with components of different nature and investigating their approximation and statistical characteristics.

{\footnotesize

\vsk

${^1}$Volodymyr Vynnychenko Central Ukrainian State University

Kropyvnytskyi, Ukraine 

yuriivolkov38@gmail.com
\\

$^2$University of California at Berkeley

Berkeley, USA

oleksandr\_volkov@berkeley.edu
\\

${^3}$Volodymyr Vynnychenko Central Ukrainian State University

Kropyvnytskyi, Ukraine 

vojnalovichn@gmail.com


\vs
}


\end{document}